\theoremstyle{plain}
\newtheorem{Thm}{Theorem}
\begin{document}

\title[Bochner formula  on locally finite
graphs ] {Bochner formula, Bernstein type estimates, and porous
media equation on locally finite graphs }

\author{Li Ma}
\address{L. Ma, Distinguished Professor, Department of mathematics \\
Henan Normal university \\
Xinxiang, 453007 \\
China} \email{lma@tsinghua.edu.cn}

\thanks{The research is partially supported by the National Natural Science
Foundation of China No. 11271111 and SRFDP 20090002110019}

\begin{abstract}
In this paper, we consider three typical problems on a locally
finite connected graph. The first one is to study the Bochner
formula for the Laplacian operator on a locally finite connected
graph. We use the Bochner formula to derive the Bernstein type
estimate of the heat equation. The second is to derive the Reilly
type formula of the Laplacian operator. The last one is to obtain
global positive solution to porous-media equation via the use of
Aronson-Benilan argument. There is not much work in the direction of
the study of nonlinear heat equations on locally finite connected
graphs.

{ \textbf{Mathematics Subject Classification 2000}: 05C50,
53Cxx,35Jxx, 68R10}

{ \textbf{Keywords}: Bochner formula, heat equation, global
solution, porous-media equation}
\end{abstract}

 \maketitle

\section{introduction}

In this paper, we study some typical problems related to heat
equations and porous-media equation on a locally finite connected
graph. We do believe that the study of nonlinear heat equations on
locally finite connected graphs is an important subject as like it
is in the Riemannian geometry (see \cite{Do2}). After some thinking,
we immediately realize that the Sobolev type inequality on graphs
\cite{Ch} plays a key role in such a research. However, Sobolev type
inequality on graphs is not a topic of this paper. We first study
the Bochner formula for the Laplacian operator on a locally finite
connected graph. Our Bochner formula is new and should be very
useful in the study of eigenvalue estimate of the Laplacian
operators on graphs. In fact, by invoking the trick of integration
by part on locally finite connected graphs, one may also obtain the
Reilly formula on locally finite connected graphs. Once we have the
Bochner formula, we use it to study the global behavior of the
bounded solution to the heat equation. It is quite nature to ask if
we can get a Bernstein type estimate for the solutions to the heat
equation on locally finite connected graphs. We can obtain this
result. The last question under our consideration of this paper is
to obtain global positive solution to porous-media equation via the
use of Aronson-Benilan argument. This is a hard question since we
may not have the Sobolev compactness imbedding theorem and it is not
easy to obtain the global solution from the exhaustion domain
method. We can overcome this difficulty by using the Aronson-Benilan
type estimate of the bounded solutions to the Porous-media equation.
Our main results are (\ref{bochner}), theorem \ref{key} and theorem
\ref{media} below.

In the previous posted in arxiv draft version of this paper, we
studied the McKean type eigenvalue estimate. Prof. J.Dodzuik
informed me that he and his coauthor had done it in \cite{DL} many
year ago. Here we would like to express my hearty thanks to him for
sending to us their paper \cite{DL}. We may call the locally finite
graph with McKean type inequality the McKean graph. Anyway, it is a
interesting problem to study related heat equation on McKean graphs
and we hope to study it in near future.

Here is the plan of the paper. In section \ref{sect2} we study the
maximum principle of heat equation and for porous media equation. In
section \ref{sect3}, we obtain the Bochner formula  and the
Bernstein type estimate. In the last section \ref{sect5}, we
consider the locally bounded global solution to the Porous-media
equation.

\section{the Maximum principles}\label{sect2}
We start from recalling some definitions and the maximum principle
for bounded solution to heat equation. Let $G=(V(G),E(G))$ be an
infinite, locally finite, connected graph without loops or multiple
edges where $V=V(G)$ is the set of vertices of G and $E=E(G)$ is the
set of edges. We still write $x\in G$ when $x$ is a vertex of $G$.
We use the notation $x \sim y$ to indicate the edge connects the
vertex $x$ to its neighbor vertex $y$. We equip $V$ with the
symmetric weight $\mu_{xy}\geq 0$ associated to the edge $x \sim y$
such that $\sum_{x\sim y}\mu_{xy}>0$ for each $x\in V$ and we always
assume that our edges are unoriented in the sense that
$\mu_{xy}=\mu_{yx}$. We call such a graph the short name the
weighted graph. Let $d_x=\sum_{x\sim y}\mu_{xy}>0$.

We define the space of all square summable functions on $G$,
$$
l^2(V)=\{f:V\to R; \sum_{x\in V} d_xf(x)^2<\infty\}
$$
with the inner product
$$
(f,g)=\sum_{x\in V} d_xf(x)g(x).
$$

Define on $l^2(V)$ the Laplacian operator for the function $f$,
$$
\Delta f(x)=\frac{1}{d_x}\sum_{x\sim y} \mu_{xy}(f(y)-f(x)).
$$
and the norm of the gradient of the function $f$ by
$$
|\nabla f|^2(x)=\frac{1}{d_x}\sum_{x\sim y} \mu_{xy}(f(y)-f(x))^2.
$$

Note that
$$
(\Delta f(x))^2\leq |\nabla f|^2(x).
$$

Fix $x_0\in G$ and let $r(x)=d(x,x_0)$. Let
$$
d_{\pm}(x)=\sum_{\{y\sim x; r(y)=r(x)\pm 1\}} \mu_{xy}.
$$
be the number of vertices which are 1 step closer or further to
$x_0$ than $x$. Define the mean curvature $H(x)$ of the sphere of
radius $r(x)$ to $x_0$ by
$$
H(x)=\Delta r(x)=\frac{1}{d_x}\sum_{x\sim y} \mu_{xy}(r(y)-r(x)).
$$
It can be verified that
$$
H(x)=\frac{d_+(x)-d_-(x)}{d_x}.
$$

Then we have the following maximum principle (\cite{Do}\cite{We}
\cite{W} \cite{MW}).

\begin{Thm}\label{max} Assume that there exists some $x_0\in G$ and a constant
$C\geq 0$ such that $H(x)\leq C$ on $V$. Let $u_0(x)$ be any bounded
function on $G$. Then any bounded solution $u(t,x)$ to the heat
equation
$$u_t=\Delta u$$
with initial data $u(0)=u_0$ satisfies
$$
\sup_G|u(t,x)|\leq \sup_G|u_0(x)|
$$
for every $t\geq 0$.
\end{Thm}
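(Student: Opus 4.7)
The plan is a textbook parabolic comparison argument on the infinite slab $[0,T]\times V$, with a linear-in-distance barrier that is admissible thanks to the bound $H\le C$. Set $M:=\sup_G|u_0|$, and for parameters $\epsilon>0$ and $\epsilon_1>0$ satisfying $\epsilon_1 C<1$ (any $\epsilon_1>0$ if $C\le 0$, and $\epsilon_1\in(0,1/C)$ if $C>0$), introduce
$$
v(t,x)=u(t,x)-M-\epsilon t-\epsilon\epsilon_1\, r(x).
$$
Using $u_t=\Delta u$ together with the identity $\Delta r=H$ recorded in the excerpt, one computes at every point of $[0,\infty)\times V$
$$
v_t-\Delta v=-\epsilon+\epsilon\epsilon_1 H(x)\le \epsilon(\epsilon_1 C-1)<0.
$$

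Next I would show $v\le 0$ on $[0,T]\times V$ for each $T>0$. The initial step is immediate: $v(0,x)\le u_0(x)-M\le 0$. Suppose instead that $\sup_{[0,T]\times V}v>0$; since $u$ is bounded and $r(x)\to\infty$ as $x$ exits any finite subset of the locally finite graph $G$, the set $\{v\ge 0\}\cap([0,T]\times V)$ is contained in $[0,T]$ times a finite ball, so the sup is realised at some $(t_0,x_0)$. The initial inequality forces $t_0>0$, hence $v_t(t_0,x_0)\ge 0$; and because $v(t_0,y)\le v(t_0,x_0)$ for every neighbor $y\sim x_0$, the defining formula for $\Delta$ gives $\Delta v(t_0,x_0)\le 0$. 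Thus $(v_t-\Delta v)(t_0,x_0)\ge 0$, contradicting the strict global inequality above.

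Hence $v\le 0$, i.e.\ $u(t,x)\le M+\epsilon(t+\epsilon_1 r(x))$; letting $\epsilon\downarrow 0$ at fixed $(t,x)$ yields $u\le M$. Since $-u$ is also a bounded solution of the heat equation with initial datum $-u_0$, the same argument applied to $-u$ delivers $-u\le M$, giving the desired two-sided bound. The only nontrivial point is the ``attainment of the supremum'' step in the middle paragraph: this is precisely where the boundedness hypothesis on $u$ and the growth $\epsilon_1 r(x)\to\infty$ of the barrier are both needed, and where the mean-curvature bound $H\le C$ is used to ensure that the barrier can be taken linear in $r$ while still preserving the strict sign of $v_t-\Delta v$.
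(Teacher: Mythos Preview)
Your argument is correct. The paper does not actually give a proof of this theorem; it simply declares ``The proof of the result above is standard'' and cites \cite{Do}, \cite{We}, \cite{W}, \cite{MW}. The paper does, however, prove the companion result Theorem~\ref{max2} for the porous-media equation, and that proof uses essentially the same barrier idea you employ: there the comparison function is $w(t,x)=u(t,x)-\frac{M_1}{R}\bigl(d(x,x_0)+Ct\bigr)$ restricted to the finite ball $B_R$, with $R\to\infty$ at the end, whereas you work on all of $V$ with the barrier $v(t,x)=u(t,x)-M-\epsilon t-\epsilon\epsilon_1 r(x)$ and send $\epsilon\downarrow 0$. Both are standard Phragm\'en--Lindel\"of variants of the same linear-in-$r$ barrier argument, and both use the hypothesis $H\le C$ in exactly the same way (to control $\Delta r$ in the barrier). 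Your version has the minor advantage of avoiding an explicit exhaustion by balls, at the cost of needing the observation that boundedness of $u$ forces the positive-supremum set of $v$ to lie in a finite subgraph; the paper's version builds that finiteness in from the start. Either way, the substance is identical.
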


The proof of the result above is standard. By the result above we
can derive the uniqueness of bounded solution to the heat equation
on $V$. In fact the claim follows by considering differences of
bounded solutions with same initial condition. Actually we can
extend the maximum principle to positive solution to the porous
media type equation
\begin{equation}\label{PM}
u_t=\Delta \log u, \ \ (0,\infty)\times V
\end{equation}
with bounded initial data. Again the proof is standard, for
completeness, we give the proof.

\begin{Thm}\label{max2} Assume that there exists some $x_0\in G$ and a constant
$C\geq 0$ such that $H(x)\leq C$ on $V$. Let $v_0(x)$ be any bounded
positive function on $G$. Then any bounded positive solution
$v(t,x)$ to the heat equation (\ref{PM}) with initial data
$v(0)=v_0$ satisfies
\begin{equation}\label{mali}
\sup_G v(t,x)\leq \sup_Gv_0(x)
\end{equation}
for every $t\geq 0$.
\end{Thm}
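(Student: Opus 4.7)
The plan is to mimic the proof of Theorem~\ref{max} via a perturbation argument, replacing the linearity of $\Delta$ by the concavity of $\log$ at the contact point. Set $M_0=\sup_G v_0(x)$, which is strictly positive since $v_0$ is bounded and positive. For $T>0$ fixed and parameters $\alpha,\delta,\epsilon>0$ to be chosen, introduce the auxiliary function
$$
w(t,x)=v(t,x)-M_0-\delta-\epsilon\bigl(\alpha t+r(x)\bigr).
$$
Because $v$ is bounded and $r(x)\to\infty$ outside any finite set, $w(t,\cdot)\to-\infty$ uniformly in $t\in[0,T]$; hence $h(t):=\sup_{x\in V}w(t,x)$ is realized on a finite set of vertices and is continuous in $t$, with $h(0)\le-\delta<0$.

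Assume for a contradiction that $h(t)>0$ somewhere on $[0,T]$. Continuity then supplies a first time $t_0\in(0,T]$ with $h(t_0)=0$, and a vertex $x_0$ at which $w(t_0,x_0)=0$ while $w(t_0,y)\le 0$ for every $y\in V$. Since $t\mapsto w(t,x_0)$ has a strict left-maximum at $t_0$, we obtain $v_t(t_0,x_0)\ge\epsilon\alpha$. The bound $w(t_0,y)\le 0$ rewrites as
$$
v(t_0,y)-v(t_0,x_0)\le\epsilon\bigl(r(y)-r(x_0)\bigr),\qquad y\in V,
$$
and concavity of $\log$ (tangent-line inequality at $v(t_0,x_0)$) gives
$\log v(t_0,y)-\log v(t_0,x_0)\le (v(t_0,y)-v(t_0,x_0))/v(t_0,x_0)$. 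Averaging over $y\sim x_0$ with weights $\mu_{x_0y}/d_{x_0}$, and invoking $H(x_0)=\Delta r(x_0)\le C$ together with $v(t_0,x_0)\ge M_0$, yields
$$
\epsilon\alpha\le v_t(t_0,x_0)=\Delta\log v(t_0,x_0)\le\frac{\epsilon\,H(x_0)}{v(t_0,x_0)}\le\frac{\epsilon C}{M_0}.
$$
Fixing $\alpha>C/M_0$ from the outset produces the contradiction.

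Therefore $w\le 0$ on $[0,T]\times V$, i.e.\ $v(t,x)\le M_0+\delta+\epsilon(\alpha t+r(x))$; letting $\epsilon\to 0$ and then $\delta\to 0$, and using the arbitrariness of $T$, delivers \eqref{mali}. The main technical step is the passage from $\Delta\log v$ to $\Delta r$ at the contact point; this is where the concavity of the logarithm does the essential work, with the lower bound $v(t_0,x_0)\ge M_0>0$ ensuring the log-linearization is nondegenerate and furnishing the explicit constant $C/M_0$ that $\alpha$ must exceed.
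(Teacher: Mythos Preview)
Your argument is correct. Both your proof and the paper's are maximum-principle arguments using a barrier that is linear in $r(x)$ and in $t$, but the implementations differ in two respects. First, the paper substitutes $u=\log v$ and works with the transformed equation $e^{u}u_t=\Delta u$; the contradiction at the maximum then comes simply from $e^{u}\ge 1$ (since $u>0$ there). You instead stay with $v$ and use the tangent-line inequality $\log a-\log b\le (a-b)/b$ to convert the spatial maximum of $v-\epsilon r$ into an upper bound on $\Delta\log v$. This is a genuine, if modest, alternative: it trades the change of variables for a one-line convexity estimate and makes the role of the lower bound $v(t_0,x_*)\ge M_0$ very explicit. Second, the paper restricts to finite balls $B_R$, uses a parabolic-boundary barrier with coefficient $M_1/R$, and sends $R\to\infty$; you exploit the boundedness of $v$ directly to ensure the supremum is attained on a fixed finite vertex set, run a first-touching-time argument, and send $\epsilon,\delta\to 0$. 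Your route avoids the ball exhaustion but requires a short compactness check (which you sketch correctly). One cosmetic point: you recycle the symbol $x_0$ for the contact vertex, while in the statement $x_0$ already denotes the base point for $r(\cdot)$; renaming the contact vertex (e.g.\ $x_*$) would avoid the clash.
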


\begin{proof} By considering $\frac{v(t,x)}{\sup_Gv_0(x)}$ and rescaling the time variable, we may assume $\sup_Gv_0(x)=1$.
Set $u(t,x)=\log v(t,x)$ and $u_0(x)=\log v_0(x)$. Then $u$
satisfies that
\begin{equation}\label{PM2} e^uu_t=\Delta u.
\end{equation}
Let $M_1= \sup \{u(t, x);(t,x)\in (0,T)\times V\}$. Note that $M_2:=
\sup\{u_0(x), x\in V\}=0$.  Clearly we may assume $M_1\geq 0$;
otherwise we are done. We consider for positive $C>0$ and $R>0$ the
function
$$ w(t,x) = u(t, x)-\frac{M_1}{R}(d(x,x_0) + Ct).$$  If we denote
by $B_R = B_R(x_0)$ the ball with radius R and the center $x_0$, we
 may conclude $w(t,x)\leq 0$ for $(t,x)\in \{0\}\times B_R\cup [0,T)\times \partial B_R$,
 which is the parabolic boundary of $[0,T)\times B_R(x_0)$. Assume that $w(t,x)$ attains its
 positive maximum at the interior point $(t_-,x_-)$ of
 $(0,T)\times B_R$, we may assume that
$$w_t>0, \ \ \Delta w\leq 0 $$ at this point. This fact implies that at
$(t_-,x_-)$, $u>0$,
$$
e^uu_t\geq u_t> \frac{M_1C}{R},
$$
and $$ \Delta u\leq\frac{M_1C}{R}.
$$
However, this is impossible due to (\ref{PM2}). Then we have
$w(t,x)\leq0 $ in $[0,T)\times B_R(x_0)$,
 which is
equivalent to $u(t,x)\leq  \frac{M_1}{ R}(d(x,x_0) + Ct)$. Letting
$R\to\infty$ we obtain $u(t,x)\leq 0$ on $[0,T)\times V$, which
gives us (\ref{mali}).
\end{proof}

 The maximum principle above gives us a comparison lemma for the porous media equation (\ref{PM}).
 We shall use this fact in section five.

\section{Bochner formula and Bernstein estimate for heat
equation}\label{sect3}
 Following the method of Bakry-Emery, we
define
$$
\Gamma(f,g)=\frac{1}{2}\{\Delta (fg)(x)-f(x)\Delta g(x)-g(x)\Delta
f(x)\}
$$
and
$$
\Gamma_2(f,g)=\frac{1}{2}\{\Delta\Gamma (fg)(x)-\Gamma(f,\Delta
g)(x)-\Gamma(g,\Delta f)(x)\}.
$$
Then, by direct computation,
$$
\Delta f^2(x)=2f(x)\Delta f(x)+|\nabla f|^2(x),
$$
$$
\Gamma(f,g)(x)=\frac{1}{2d_x}\sum_{y\sim x}
\mu_{xy}(f(y)-f(x))(g(y)-g(x)),
$$
$$
\Gamma(f,f)(x)=\frac{1}{2}|\nabla f|^2(x),
$$
\begin{equation}\label{gamma2}
\Gamma_2(f,f)(x)=\frac{1}{4}|D^2f|^2(x)-\frac{1}{2}|\nabla
f|^2(x)+\frac{1}{2}(\Delta f)^2(x),
\end{equation}
where
$$
|D^2f|^2(x):=\frac{1}{d_x}\sum_{y\sim
x}\frac{\mu_{xy}}{d_y}\sum_{z\sim y} \mu_{yz}|f(x)-2f(y)+f(z)|^2.
$$

We now compute the Bochner formula for the function $f$.
$$
-\Delta |\nabla f|^2(x)=-|D^2f|^2(x)+\frac{2}{d_x}\sum_{y\sim
x}\frac{\mu_{xy}}{d_y}\sum_{z\sim y}
\mu_{yz}(f(x)-2f(y)+f(z))(f(x)-f(y).
$$
Set $$ I=\frac{2}{d_x}\sum_{y\sim x}\frac{\mu_{xy}}{d_y}\sum_{z\sim
y} \mu_{yz}(f(x)-2f(y)+f(z))(f(x)-f(y).
$$
 Note that
\begin{eqnarray*}
I&=2|\nabla f|^2(x)+\frac{2}{d_x}\sum_{y\sim
x}\mu_{xy}(f(x)-f(y)\Delta f(y)
\\ &
=2|\nabla f|^2(x)+\frac{2}{d_x}\sum_{y\sim
x}\mu_{xy}(f(x)-f(y)(\Delta f(y) \\ &-\Delta f(x))+\Delta
f(x)\frac{2}{d_x}\sum_{y\sim x}\mu_{xy}(f(x)-f(y) \\
& =2|\nabla
f|^2(x)+2|\Delta f|^2(x)-2(\nabla f,\nabla \Delta f)(x).
\end{eqnarray*} Then we have the following Bochner formula:
\begin{equation}\label{bochner}
-\Delta |\nabla f|^2(x)=-|D^2f|^2(x)+2|\nabla f|^2(x)+2|\Delta
f|^2(x)-2(\nabla f,\nabla \Delta f)(x).
\end{equation}

We now use this formula to derive the Bernstein type estimate for
the bounded solution $f(t,x)$ to the heat equation
$$
f_t=\Delta f
$$
with initial data $f_0$. Using (\ref{bochner}) we get that
\begin{equation}\label{bh}
(\partial_t-\Delta)(\frac{1}{2}|\nabla
f|^2(t,x))=-\frac{1}{2}|D^2f|^2(t,x)+|\nabla f|^2(x)+|\Delta
f|^2(t,x).
\end{equation}

Assume on $G$ the curvature condition
\begin{equation}\label{bakry}
\Gamma_2(f,f)\geq \frac{1}{m}(\Delta f)^2(x)++\frac{k}{2}|\nabla
f|^2(x),
\end{equation}
for some constants $m>0$ and $k\in \mathbb{R}$.

Using (\ref{gamma2}) we know that
$$
\frac{1}{2}|D^2f|^2(x)\geq (k+1)|\nabla
f|^2(x)+(\frac{2}{m}-1)(\Delta f)^2(x).
$$
Inserting this back to (\ref{bh}) we get that
$$
(\partial_t-\Delta)(\frac{1}{2}|\nabla f|^2(t,x))\leq -k|\nabla
f|^2(t,x)+(2-\frac{2}{m})|\Delta f|^2(t,x).
$$
Using
$$
|\Delta f|^2(x)\leq |\nabla f|^2(x)
$$
we obtain that
$$
(\partial_t-\Delta)(\frac{1}{2}|\nabla f|^2(t,x))\leq (-k
(2-\frac{2}{m})_+)|\nabla f|^2(t,x).
$$
Recall that
$$
(\partial_t-\Delta) f^2(t,x)=-|\nabla f|^2(t,x).
$$
 Then we can choose $\alpha>0$ such that
 $$
(\partial_t-\Delta)(t|\nabla f|^2(t,x))+\alpha f^2(t,x))\leq 0.
 $$
Using the maximum principle we then have
$$
t|\nabla f|^2(t,x))+\alpha f^2(t,x) \leq \alpha \sup_G f_0^2(x)
$$

Since $f(t,x)$ is uniformly bounded in $t$, we know that there
exists $t_k\to\infty$ such that
$$
f(t_k,x)\to f_\infty(x)
$$
for each $x\in G$, and $\lim_{t_k\to\infty}|\nabla f(t_k,x)|^2\to
0$, which implies that $f_\infty(x)=const.$.
 In conclusion we have the result below.

 \begin{Thm}\label{key} Assume that $G$ is a locally finite
 connected graph with curvature condition (\ref{bakry}).
Assume that there exists some $x_0\in G$ and a constant $C\geq 0$
such that $H(x)\leq C$ on $V$. Let $f_0(x)$ be any bounded function
on $G$. Then any bounded solution $f(t,x)$ to the heat equation
$$f_t=\Delta f$$
with initial data $f(0,x)=f_0(x)$ exists globally and there exists
$t_k\to\infty$ such that
$$f(t_k,x)\to f_\infty(x)
$$
where $f_\infty(x)$ is a constant function.
 \end{Thm}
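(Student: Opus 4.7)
The plan is to turn the Bochner formula (\ref{bochner}) into a differential inequality of the form $(\partial_t - \Delta)w \leq 0$ for a cleverly chosen bounded quantity $w(t,x)$, then apply Theorem \ref{max} to extract a pointwise decay estimate for $|\nabla f|$, and finally pass to a subsequential limit using the boundedness of $f$ and the connectedness of $G$.

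First, I would combine the Bochner identity (\ref{bh}) with the curvature assumption (\ref{bakry}) and the expansion (\ref{gamma2}) to lower-bound $\tfrac12|D^2f|^2$ by $(k+1)|\nabla f|^2 + (\tfrac{2}{m}-1)(\Delta f)^2$. Substituting back into (\ref{bh}) and using the pointwise bound $(\Delta f)^2 \leq |\nabla f|^2$ yields a linear heat-type inequality
\begin{equation*}
(\partial_t - \Delta)\bigl(\tfrac12|\nabla f|^2\bigr) \leq A\,|\nabla f|^2, \qquad A := -k + (2-\tfrac{2}{m})_+.
\end{equation*}
Coupling this with the elementary identity $(\partial_t - \Delta)f^2 = -|\nabla f|^2$, which follows from $\Delta f^2 = 2f\Delta f + |\nabla f|^2$, I would introduce the weighted quantity $w(t,x) := t|\nabla f|^2(t,x) + \alpha f^2(t,x)$. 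For $\alpha$ chosen sufficiently large (depending only on $A$), a direct computation produces the key inequality $(\partial_t - \Delta)w \leq 0$ on $[0,T]\times V$ for any finite $T$, with $w(0,x) = \alpha f_0(x)^2$ bounded.

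Next, since $w$ is bounded (thanks to the a priori bound on $f$ from Theorem \ref{max} and the trivial bound $|\nabla f|^2 \leq 4 \sup |f|^2$), the maximum principle of Theorem \ref{max} applies to the bounded subsolution $w$, yielding
\begin{equation*}
t|\nabla f|^2(t,x) + \alpha f^2(t,x) \leq \alpha \sup_G f_0^2(x)
\end{equation*}
for all $t \geq 0$ and $x \in V$. In particular $|\nabla f(t,x)|^2 \leq C/t \to 0$ as $t \to \infty$ for each fixed vertex. Global existence itself is not a serious issue, since the heat equation is linear and bounded initial data remain bounded under Theorem \ref{max}. Because $V$ is countable and $\{f(t,x)\}_t$ is bounded for each $x$, a Cantor diagonal extraction produces $t_k \to \infty$ such that $f(t_k,x) \to f_\infty(x)$ pointwise; the gradient decay together with local finiteness of $G$ allows one to pass to the limit in the finite sum defining $|\nabla f|^2(x)$, so that $f_\infty(y) = f_\infty(x)$ for every edge $y \sim x$, and connectedness of $G$ forces $f_\infty$ to be constant.

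The main obstacle I anticipate is a careful justification that Theorem \ref{max} applies to subsolutions $w$ rather than just to solutions of $u_t = \Delta u$. One must inspect the barrier argument (of the type used in the proof of Theorem \ref{max2}) to confirm that the same comparison with $\tfrac{M}{R}(d(x,x_0)+Ct)$ works provided $H(x) \leq C$, which is precisely the hypothesis assumed. A secondary subtlety is tracking the correct value of $\alpha$: it must dominate $A$ after accounting for the $t$-derivative of the weight $t$, so the choice $\alpha \geq 2A_+$ suffices on any finite time interval, and the estimate then extends to all $t \geq 0$ since the bound is independent of $T$.
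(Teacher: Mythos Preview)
Your proposal is correct and follows essentially the same route as the paper: use (\ref{bh}) together with the curvature bound (\ref{bakry}) and (\ref{gamma2}) to get a linear differential inequality for $\tfrac12|\nabla f|^2$, couple it with $(\partial_t-\Delta)f^2=-|\nabla f|^2$ to make $w=t|\nabla f|^2+\alpha f^2$ a bounded subsolution, apply the maximum principle, and then pass to a subsequential limit. The paper's own argument is in fact terser than yours---it neither spells out the diagonal extraction nor flags that Theorem~\ref{max} must be used for subsolutions---so your added caveats are appropriate rather than divergent.
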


\section{Global solution to the porous-media equation}\label{sect5}

Given any bounded positive function $u_0:V\to \mathbb{R}_+$. We
consider the global existence of the positive solution $u(t,x)$ to
the porous-media equation
\begin{equation}\label{porous} u_t=\Delta \log u, \ \ \ in \ \
(0,\infty)\times V
\end{equation}
with the initial data $u(0)=u_0$. Just like in the Euclidean domain
case, we may define the equation (\ref{porous}) in the distribution
sense (in time variable). Namely for any compact domain supported
function $\phi$ (which is smooth in the t-variable)defined on space
$(0,\infty)\times V$, we have $$ -\int u\phi _t=\int  \log u
\Delta\phi,
$$
where the integration is taken over the space $(0,\infty)\times V$.

Take any finite subgraph $\Omega\subset V$. We may first consider
(\ref{porous}) in $(0,\infty)\times \Omega$ with initial data and
boundary condition $u_0$. Let $f=\frac{1}{2}\log u$. Then $u=e^{2f}$
and it satisfies the equivalent problem
\begin{equation}\label{porous2} e^f (e^f)_t=\Delta f, \ \ \ in \ \
(0,\infty)\times V.
\end{equation}

Actually we can get the local in time solution $u_\Omega$ to
(\ref{porous}) (respectively $f_\Omega=\frac{1}{2}\log u_\Omega$ to
(\ref{porous2})) in $(0,T)\times \Omega$ (for some $T>0$) by using
the discrete Morse flow method \cite{MW2}.

 For $N>1$ an integer and any $T>0$, let
\[
h=T/N, \quad t_n=nh, \quad n=0,1,2,...,N.
\]
Assume that we have constructed $f_j\in L^2(\Omega)$, $0\leq j\leq
n-1$, and $f_{n-1}$ is a minimizer of the functional
\[
I_{n-1}(f)=\frac{1}{2h}\int_\Omega
|e^{f}-e^{f_{n-2}}|^2\,dx+\frac{1}{2}\int_\Omega |\nabla f|^2\,dx
\]
on the space $H=\{f\in L^2(\Omega)\,|\, f-f_0=0, \ on \
\partial\Omega\}$. Note that $H$ is a closed convex subset of $L^2(\Omega)$. Define
\[
I_n(f)=\frac{1}{2h}\int_\Omega
|e^{f}-e^{f_{n-1}}|^2\,dx+\frac{1}{2}\int_\Omega |\nabla f|^2\,dx
\]
on $H$. It is clear that the infimum is finite and, by applying the
Poincar\'e inequality to $f-f_0$ (see \cite{Ch}), that any
minimizing sequence is bounded in $H$.  By the direct method in the
calculus of variations, one concludes that $I_n$ has a unique
minimizer $f_n$ in $H$ which satisfies
\[
\frac{1}{h}\left(e^f-e^{f_{n-1}}\right)e^f =\Delta f
\]
along with the uniform energy bound
\begin{equation}\label{energy}
\frac{1}{2h}\int_\Omega
|e^{f_n}-e^{f_{n-1}}|^2\,dx+\frac{1}{2}\int_\Omega |\nabla
f_n|^2\,dx\leq \frac{1}{2}\int_\Omega |\nabla f_{n-1}|^2\,dx\leq C.
\end{equation}

We define $f_N(t)\in L^2$ for $t\in [0,T]$ such that, for
$n=1,\dots,N$,
\[
f_N(t)=f_n,  \quad t\in[t_{n-1},t_n].
\]
We further define, for $n=1,\dots, N$,
\[
\partial_t e^{f_N}(t)=\frac{1}{h}(e^{f_n}-e^{f_{n-1}}), \quad
t\in [t_{n-1},t_n].
\]
Then $f_N$ satisfies $$ e^{f_N}\partial_t e^{f_N}(t)=\Delta f_N
$$
in $\Omega\times (0,T)$. Note that the energy bound (\ref{energy})
implies that
$$
\int_0^T \int_\Omega e^{2f_N}+\sup_t\int_\Omega |\nabla
f_N|^2\,dx\leq 5C.
$$
We may use the Poincare inequality to get the uniform $L^2(\Omega)$
bound of $\{f_N\}$.
 Taking a subsequence of $\{f_N\}$ that converges in $L_t^\infty
H$, one obtains a limit $f\in L_t^\infty H$ that satisfies
\[
e^f\partial_t e^f =\Delta f
\]
in distribution sense in the domain $\Omega\times (0,T)$.

To get the globally defined solution, we need the linear upper bound
for $u_\Omega=e^{2f}$ and we follow a well-known argument due to
Aronson and Benilan.

Let $\lambda>1$. Define
$$
w_\lambda(t,x)=\lambda u_\Omega(\lambda^{-1}t,x).
$$
Then $w_\lambda(t,x)$ satisfies (\ref{porous}) in $(0,T)\times
\Omega$ with the initial data and boundary condition $\lambda
u_0(x)$, which is bigger than $u_0(x)$. By using the comparison
principle we know that
$$
w_\lambda(t,x)>u_\Omega(t,x)  \ \ \ in \ \ (0,T)\times \Omega.
$$
Set
$$
v_\lambda(t,x)=w_\lambda(t,x)-u_\Omega(t,x).
$$
Then
$$
\frac{\partial}{\partial\lambda}v_\lambda(t,x) \geq 0, \ \ in \ \
0,T)\times \Omega,
$$
or equivalently $u\leq t^{-1}u$ for $u=u_\Omega$, which by
integration, implies that $u_\Omega(t,x)\leq C(1+t)$ where $C>0$ is
a constant depending only on $u_0$. Hence we can extend the solution
$u_\Omega(t,x)$ globally. Take $\Omega=\Omega_j$ where $V=\bigcup
\Omega_j$, $\Omega_j\subset \Omega_{j+1}$ are exhaustion finite
subgraphs of $V$. Then we get a sequence of solutions $\{u_j\}$
defined on $\Omega_j\times (0,\infty)$. By taking diagonal
subsequence  we can get a sub-convergence sequence on any finite
subset of $V$, still denoted by $\{u_j\}$ and a global (locally
bounded) solution $u(t,x)$ of $(\ref{porous})$ with initial data
$u_0$ such that
$$
u(t,x)=\lim_{j\to\infty} u_j(t,x),
$$
locally in $(0,\infty)\times V$. In summary, we then have

\begin{Thm}\label{media} For any bounded positive function $u_0:V\to \mathbb{R}_+$, there
exists a global solution to (\ref{porous}) with initial data $u_0$.
\end{Thm}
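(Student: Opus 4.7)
The plan is to construct the global solution by exhaustion. Fix an increasing sequence of finite subgraphs $\Omega_j \subset \Omega_{j+1}$ with $V = \bigcup_j \Omega_j$. On each $\Omega_j$ I would first produce a short-time solution $u_{\Omega_j}$ to (\ref{porous}) with initial-boundary data $u_0$ via the discrete Morse flow method applied to the substituted equation $e^f(e^f)_t = \Delta f$, where $f = \tfrac{1}{2}\log u$. Concretely, partition $[0,T]$ into $N$ steps of length $h$ and inductively define $f_n$ as the unique minimizer in the closed convex set $H = \{f \in L^2(\Omega_j)\,:\, f = f_0 \text{ on } \partial \Omega_j\}$ of the functional
\[
I_n(f) = \frac{1}{2h}\int_{\Omega_j} |e^f - e^{f_{n-1}}|^2\,dx + \frac{1}{2}\int_{\Omega_j} |\nabla f|^2\,dx.
\]
Existence of the minimizer is the direct method, using the Poincaré inequality on the finite graph to bound minimizing sequences in $H$. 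The Euler--Lagrange equation is exactly the implicit time-step scheme, and a telescoping argument yields the uniform energy bound (\ref{energy}) together with an $L^2_t L^2_x$ bound on $e^{f_N}$. Extracting an $L^\infty_t H$ limit then gives a distributional solution $f$ of $e^f \partial_t e^f = \Delta f$ on $\Omega_j \times (0,T)$.

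Next, to promote these to global-in-time solutions on each $\Omega_j$, I would run the Aronson--Benilan scaling argument. For $\lambda > 1$, set $w_\lambda(t,x) = \lambda\, u_{\Omega_j}(\lambda^{-1} t, x)$; this still satisfies (\ref{porous}) on $\Omega_j$ but with initial-boundary data $\lambda u_0 \geq u_0$. The comparison form of Theorem \ref{max2}, applied to $w_\lambda - u_{\Omega_j}$ on the finite parabolic cylinder, gives $w_\lambda \geq u_{\Omega_j}$. Differentiating at $\lambda = 1$ yields the pointwise estimate $u_t \geq -u/t$ on $\Omega_j$, and integration in $t$ produces the linear growth bound $u_{\Omega_j}(t,x) \leq C(1+t)$ with $C$ depending only on $\sup u_0$. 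This rules out finite-time blow-up, so the local existence can be iterated to obtain $u_j$ defined on all of $\Omega_j \times (0,\infty)$.

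Finally, the bound $u_j(t,x) \leq C(1+t)$ is uniform in $j$ on any fixed $[0,T] \times K$ with $K \Subset V$, and the equation evaluated at $x \in K$ involves only finitely many neighboring vertices, which eventually all lie in $\Omega_j$. Therefore a Cantor diagonal extraction across the countable exhaustion yields a subsequence converging locally in $(0,\infty) \times V$ to a limit $u$, and passing to the limit in the distributional formulation is routine because the operator $\Delta$ is local in the graph sense. This $u$ is the desired global solution. The main obstacle I anticipate is twofold: first, justifying that the discrete Morse flow limit really solves the equation pointwise (not merely distributionally) on the locally finite graph, which requires exploiting that $\Delta$ is a bounded operator on each finite $\Omega_j$; and second, verifying that the Aronson--Benilan comparison indeed applies with Dirichlet boundary data on the finite subgraph, since Theorem \ref{max2} as stated concerns solutions on all of $V$. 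However, on the finite cylinder $[0,T) \times \Omega_j$ the parabolic maximum principle reduces to an elementary interior-extremum argument identical to the proof of Theorem \ref{max2}, so this obstacle is mild.
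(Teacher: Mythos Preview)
Your plan follows the paper's own argument essentially step for step: discrete Morse flow on finite subgraphs for local existence, the Aronson--Benilan scaling $w_\lambda(t,x)=\lambda\,u_\Omega(\lambda^{-1}t,x)$ together with comparison to obtain a linear-in-$t$ upper bound, and then diagonal extraction across an exhaustion. The only slip is the sign in the Aronson--Benilan step: differentiating $w_\lambda\ge u_\Omega$ at $\lambda=1$ gives $\partial_\lambda w_\lambda|_{\lambda=1}=u_\Omega - t\,\partial_t u_\Omega\ge 0$, i.e.\ $u_t\le u/t$, not $u_t\ge -u/t$; it is this \emph{upper} bound on $u_t$ that integrates to $u_\Omega(t,x)\le C(1+t)$ and rules out finite-time blow-up. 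With that correction your proposal coincides with the paper's proof.
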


The uniqueness question to (\ref{porous}) is a interesting ( may be
very difficult) problem and it can be considered by using the
maximum principle. We leave it open to interesting readers.

\end{document}